
\documentclass[noinfoline]{article}

\RequirePackage[OT1]{fontenc}
\RequirePackage[
amsthm,amsmath
]{imsart}

\usepackage{graphicx}
\usepackage{latexsym,amsmath}
\usepackage{amsmath,amsthm,amscd}
\usepackage{amsfonts}
\usepackage[psamsfonts]{amssymb}
\usepackage{enumerate}
\usepackage{cite}

\usepackage{url}
\usepackage{tocvsec2}

\usepackage{epstopdf}
\usepackage{float}



\startlocaldefs

\setattribute{keyword}{AMS}{AMS 2010 subject classifications:}



\theoremstyle{plain} 
\newtheorem{theorem}{Theorem}
\newtheorem{corollary}[theorem]{Corollary}

\newtheorem{lemma}
{Lemma}

\theoremstyle{definition} 

\theoremstyle{definition} 

\newtheorem*{ex*}{Example}
\theoremstyle{remark} 

\theoremstyle{remark} 

\newtheorem*{remark*}{Remark}

\newcommand{\dd}{\partial}

\renewcommand{\dd}{{\operatorname{d}}}

\renewcommand{\a}{k}

\newcommand{\al}{\alpha}

\newcommand{\si}{\sigma}

\newcommand{\la}{\lambda}

\newcommand{\be}{\beta}

\renewcommand{\th}{\theta}

\newcommand{\intr}[2]{\overline{#1,#2}}

\renewcommand{\P}{\operatorname{\mathsf{P}}} 
\newcommand{\E}{\operatorname{\mathsf{E}}}

\newcommand{\Z}{\mathbb{Z}}
\newcommand{\R}{\mathbb{R}}

\newcommand{\F}{\mathcal{F}}

\newcommand{\J}{\mathcal{J}}

\newcommand{\A}{\mathcal{A}}

\newcommand{\tc}{{\tilde{c}}}

\renewcommand{\le}{\leqslant}
\renewcommand{\ge}{\geqslant}

 \pagenumbering{arabic}

\renewcommand{\a}[2]{a_{#1}^{(#2)}}
\renewcommand{\A}[2]{A_{#1}^{(#2)}}
\renewcommand{\c}[2]{c_{#1}^{(#2)}}
\renewcommand{\tc}[2]{\tilde{c}_{#1}^{(#2)}}

\newcommand{\X}{{\mathfrak{X}}}
\newcommand{\W}{{\mathfrak{P}}}

\endlocaldefs

\begin{document}


\begin{frontmatter}

\title{Rosenthal-type inequalities for martingales in 2
-smooth Banach spaces}
\runtitle{Martingales in $2$-smooth spaces}

%

\begin{aug}
\author{\fnms{Iosif} \snm{Pinelis}\thanksref{t2}\ead[label=e1]{ipinelis@mtu.edu}
}
  \thankstext{t2}{Supported by NSA grant H98230-12-1-0237}
\runauthor{Iosif Pinelis}


\address{Department of Mathematical Sciences\\
Michigan Technological University\\
Houghton, Michigan 49931, USA\\
E-mail: \printead[ipinelis@mtu.edu]{e1}}
\end{aug}

\begin{abstract} 
Certain previously known upper bounds on the moments of the norm of martingales in 2-smooth Banach spaces are improved. Some of these improvements hold even for sums of independent real-valued random variables. Applications   to concentration of measure on product spaces for
separately Lipschitz functions are presented, including ones concerning the central moments of the norm of the sums of independent random vectors in any separable Banach space.     
\end{abstract}

  
%

\begin{keyword}[class=AMS]
\kwd{60E15}
\kwd{60B11} 
\end{keyword}

\begin{keyword}
\kwd{probability inequalities}
\kwd{bounds on moments}
\kwd{Rosenthal inequality}
\kwd{2-smooth Banach spaces}
\kwd{Hilbert spaces}
\kwd{martingales}
\kwd{sums of independent random variables}
\kwd{concentration of measure}
\kwd{separately Lipschitz functions}
\kwd{product spaces}
\end{keyword}



\end{frontmatter}

\settocdepth{chapter}


\settocdepth{subsubsection}

\theoremstyle{plain} 


\section{Introduction, summary, and discussion}\label{intro} 
Let $(S_i)_{i=0}^\infty$ be a martingale in a separable Banach space $(\X,\|\cdot\|)$ relative to a filter $(\F_i)_{i=0}^\infty$ of $\si$-algebras. 
Assume that $S_0=0$ and the differences $X_i:=S_i-S_{i-1}$ 
satisfy the condition 
\begin{equation}\label{eq:cond}
	\E_{i-1}\|X_i\|^2\le b_i^2
\end{equation}
almost surely for all $i\in\intr1\infty$, where 
$\E_{i-1}$ denotes the conditional expectation given $\F_{i-1}$ and 
the $b_i$'s are some positive real numbers.   
Here and in what follows, for any $m$ and $n$ in $\Z\cup\{\infty\}$ we let $\intr mn:=\{i\in\Z\colon m\le i\le n\}$. 
Also introduce  
\begin{equation*}
	\a it:=\E\|X_i\|^t, \quad \A nt:=\sum\limits_{i\in\intr1n} a_i(t), \quad\text{and}\quad B_n:=\sqrt{\sum_{i\in\intr1n} b_i^2} 
\end{equation*}
for all real $t\ge0$, $i\in\intr1\infty$, and $n\in\intr0\infty$; as usual, assume that the sum and product of an empty family are $0$ and $1$, respectively; let also $0^0:=1$. 

Assume further that the Banach space $(\X,\|\cdot\|)$ is 2-smooth \big(or, more exactly, $(2,D)$-smooth, for some $D = D(\X ) >0$\big), in the following sense: 
\begin{equation}\label{eq:2-smooth}
	\| x+y\| ^2 + \| x-y\| ^2 \le 2\| x\| ^2 +2D^2\| y\| ^2  
\end{equation}
for all $x$ and $y$ in $\X$. 
The importance  of the 2-smooth spaces was elucidated in 
\cite{pisier75}: they play the same role with respect to the vector martingales  as the spaces of type 2 do with respect to the sums of independent random vectors.
The definition of 2-smooth spaces assumed in this paper is the same as that in \cite{pin94}, which is slightly different from the one given 
in \cite{pisier75} -- which latter required only that \eqref{eq:2-smooth} hold for an equivalent norm; the reason for the modified definition is that 
we would like to follow the dependence of certain constants on $D$, the constant of 2-smoothness.
Substituting $\lambda x$ for $y$ in \eqref{eq:2-smooth}, where $\lambda \in \R$, one observes that necessarily $D\ge 1$, except for $\X=\{0\}$. 
As shown in \cite{pin94,wellner_nemir}, the space $L^p (T, {\cal A}, \nu)$ is $(2,\sqrt {p-1} \, )$-smooth for any $p\ge 2$ and any measure space
$(T,{\cal A}, \nu)$. 
In particular, what is obvious and well-known, if $\X $ is a Hilbert space then it is (2,1)-smooth.  

\begin{theorem}\label{th:}
Take any real $t\ge0$ and $n\in\intr0\infty$. Then  
\begin{equation}\label{eq:}
	\begin{aligned}
	\E\|S_n\|^t\le
	\sum_{j\in\intr0{m-1}}\c jt & \sum_{J\in\J_{n,j}} \A{\mu_n(J)-1}{t-2j} \prod_{i\in J}b_i^2 \\ 
	+\tc mt & \sum_{J\in\J_{n,m}} \big(\A{\mu_n(J)-1}2\big)^{t/2-m} \prod_{i\in J}b_i^2, 
	\end{aligned}
\end{equation}
where $m:=\lfloor\frac t2\rfloor$, 
$\J_{n,j}$ 
denotes the set of all subsets of the set $\intr1n$ of cardinality $j$, 
$\mu_n(J):=\min J$ if $J\ne\emptyset$, $\mu_n(\emptyset):=n+1$, 
\begin{align*}
	\c jt:=
	\frac{t-2j-2+D^2}{t-2j-1}\,q(t-2j)
	\prod_{k\in\intr0{j-1}}\frac{(t-2k)(t-2k-2+D^2)\,p(t-2k)}2, \\
	\tc mt:=
	\prod_{j\in\intr0{m-1}}\frac{(t-2j)(t-2j-2+D^2)\,p(t-2j)}2, 
\end{align*}
and $p(\cdot)$ and $q(\cdot)$ are any functions such that 
\begin{equation}
\begin{gathered}\label{eq:p(s),q(s)}
	p(2)+q(2)\ge1,\quad p(2)>0,\quad q(2)>0, \\
	p(s)\ge1 \quad\text{and}\quad q(s)\ge1\quad\text{for all}\quad s>2, \\ 
	p(s)^{1/(3-s)}+q(s)^{1/(3-s)}\le1\quad\text{for all}\quad s>3. 
\end{gathered}	
\end{equation}
\end{theorem}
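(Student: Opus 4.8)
The plan is to use $(2,D)$-smoothness only through its effect on $\|\cdot\|^2$, reducing each conditional step to a one-dimensional estimate, and then to assemble \eqref{eq:} by an induction that lowers the exponent by $2$ at a time. First I would record the operative form of $(2,D)$-smoothness (as established in \cite{pin94}): for each $i\in\intr1n$ there is an $\F_{i-1}$-measurable continuous linear functional $\Lambda_i$ (the derivative of $\|\cdot\|^2$ at $S_{i-1}$) with $\E_{i-1}\Lambda_i(X_i)=0$ and $|\Lambda_i(X_i)|\le2\|S_{i-1}\|\,\|X_i\|$, such that pointwise
\[
\|S_i\|^2\le \|S_{i-1}\|^2+\Lambda_i(X_i)+D^2\|X_i\|^2=:W_i .
\]
Because $D\ge1$, one has $W_i\ge(\|S_{i-1}\|-\|X_i\|)^2\ge0$, so $v\mapsto v^{t/2}$ (increasing on $[0,\infty)$) gives $\E_{i-1}\|S_i\|^t\le\E_{i-1}W_i^{t/2}$. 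Writing $a:=\|S_{i-1}\|^2$, $\xi:=\Lambda_i(X_i)$ and $\eta:=D^2\|X_i\|^2$, this reduces the step to estimating $\E_{i-1}(a+\xi+\eta)^{t/2}$ for a conditionally centered $\xi$, a nonnegative $\eta$, and the deterministic constraint $\xi^2\le\frac{4a}{D^2}\,\eta$.

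The technical heart, which I expect to be the main obstacle, is the resulting one-step conditional inequality: for all real $t\ge2$,
\[
\E_{i-1}\|S_i\|^t\le \|S_{i-1}\|^t+\tc 1t\,\|S_{i-1}\|^{t-2}\,\E_{i-1}\|X_i\|^2+\c 0t\,\E_{i-1}\|X_i\|^t ,
\]
where $\tc 1t=\frac{t(t-2+D^2)}2\,p(t)$ and $\c 0t=\frac{t-2+D^2}{t-1}\,q(t)$. I would obtain this from a purely deterministic pointwise bound of the shape $(a+\xi+\eta)^{t/2}\le a^{t/2}+\tfrac t2\,a^{t/2-1}\xi+c_p\,a^{t/2-1}\eta+c_q\,\eta^{t/2}$, valid for every admissible pair $(\xi,\eta)$: taking $\E_{i-1}$ annihilates the linear term since $\E_{i-1}\xi=0$, the $a^{t/2-1}\eta$ coefficient produces the $p$-term via $\eta=D^2\|X_i\|^2$, and the $\eta^{t/2}$ coefficient produces the $q$-term via $\eta^{t/2}=D^t\|X_i\|^t$, with $c_p,c_q$ chosen to yield exactly $\tc 1t$ and $\c 0t$. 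Verifying the pointwise inequality over the whole admissible range is precisely where the constraints \eqref{eq:p(s),q(s)} are consumed: the ``variance'' allocation must be balanced against the ``heavy-tail'' allocation, and I expect the three clauses of \eqref{eq:p(s),q(s)} to reflect the regimes $t=2$, $2<t\le3$, and $t>3$ (the exponent $1/(3-t)$ marking the transition at $t=3$).

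Granting this, I would take full expectations, replace $\E_{i-1}\|X_i\|^2$ by $b_i^2$ (legitimate since its coefficient is nonnegative), and sum over $i\in\intr1n$ using $S_0=0$, obtaining the one-level recursion
\[
\E\|S_n\|^t\le \c 0t\,\A nt+\tc 1t\sum_{i\in\intr1n}b_i^2\,\E\|S_{i-1}\|^{t-2}.
\]
The proof then closes by induction on $m=\lfloor t/2\rfloor$: apply the theorem at exponent $t-2$ to each $\E\|S_{i-1}\|^{t-2}$ and reindex by adjoining the peeled index $i$ as the new largest element, so that $\sum_i b_i^2\sum_{J'\in\J_{i-1,j'}}(\cdots)$ becomes $\sum_{J\in\J_{n,j'+1}}(\cdots)$ with $i=\max J$, $\mu_n(J)=\min J$, and $\prod_{\ell\in J'}b_\ell^2\cdot b_i^2=\prod_{\ell\in J}b_\ell^2$. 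The constants then collapse through the telescoping identities
\[
\tc 1t\,\c{j-1}{t-2}=\c jt,\qquad \tc 1t\,\tc{m-1}{t-2}=\tc mt,
\]
which hold because each descent contributes exactly one further factor $\frac{(t-2k)(t-2k-2+D^2)p(t-2k)}2$ to the products defining $\c jt$ and $\tc mt$, together with the exponent identity $\tfrac{t-2}2-(m-1)=\tfrac t2-m$ for the boundary term. The induction bottoms out once the running exponent lands in $[0,2)$: there $v\mapsto v^{(t-2m)/2}$ is concave, so Jensen's inequality applied to the smoothness bound controls the residual expectation and delivers the boundary term carrying the coefficient $\tc mt$ and the power $\tfrac t2-m$ of $\A{\mu_n(J)-1}2$. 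Collecting the $j=0,\dots,m-1$ contributions together with the boundary term then reproduces \eqref{eq:}.
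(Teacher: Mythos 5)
Your outer scaffolding --- the one-level recursion, the reindexing $J=J'\cup\{i\}$ with $i=\max J$, and the telescoping identities $\tc 1t\,\c{j-1}{t-2}=\c jt$ and $\tc 1t\,\tc{m-1}{t-2}=\tc mt$ --- is sound and matches the structure the paper inherits from \cite[Theorem~1]{pin80}. The gap is in what you yourself call the technical heart. You propose to reach the one-step inequality by first majorizing the \emph{square}, $\|S_i\|^2\le W_i:=\|S_{i-1}\|^2+\Lambda_i(X_i)+D^2\|X_i\|^2$, and then raising to the power $t/2$ via the deterministic bound
\[
(a+\xi+\eta)^{t/2}\le a^{t/2}+\tfrac t2\,a^{t/2-1}\xi+c_p\,a^{t/2-1}\eta+c_q\,\eta^{t/2},
\]
with $c_q$ chosen so that $c_q\eta^{t/2}=c_qD^t\|X_i\|^t$ reproduces $\c 0t\|X_i\|^t=\frac{t-2+D^2}{t-1}q(t)\|X_i\|^t$, i.e.\ $c_q=\frac{t-2+D^2}{(t-1)D^t}\,q(t)$. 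That pointwise inequality is false on the admissible set: at $a=0$ (hence $\xi=0$, by your own constraint $\xi^2\le 4a\eta/D^2$) it reads $\eta^{t/2}\le c_q\,\eta^{t/2}$, i.e.\ $D^t\le\frac{t-2+D^2}{t-1}q(t)$, which fails for $D>1$ and admissible $p,q$: e.g.\ $t=4$, $D=2$, $p(4)=q(4)=2$ (allowed by \eqref{eq:p(s),q(s)} since $\tfrac12+\tfrac12\le1$) would require $16\le4$. The defect is structural, not a matter of tuning constants: passing through the quadratic majorant $W_i$ and then taking the $t/2$ power necessarily inflates the coefficient of $\|X_i\|^t$ to order $D^t$, whereas the theorem's $\c 0t$ is only of order $D^2$.

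The paper avoids this by proving the one-step bound (Lemma~\ref{lem:1}) as a pointwise inequality for $\|x+y\|^t-\|x\|^t$ directly, via the Taylor formula $f(1)=f(0)+f'(0)+\int_0^1(1-\th)f''(\th)\,\dd\th$ applied to $f(\th)=\|x+\th y\|^t$. There the smoothness constant enters only linearly, through $Q''(x_\th)(y,y)\le2D^2\|y\|^2$ and $|Q'(x_\th)(y)|\le2\|x_\th\|\,\|y\|$, giving $f''(\th)\le t(t-2+D^2)\|x_\th\|^{t-2}\|y\|^2$; the split $(\|x\|+\th\|y\|)^{t-2}\le p(t)\|x\|^{t-2}+q(t)\th^{t-2}\|y\|^{t-2}$ is where the three clauses of \eqref{eq:p(s),q(s)} are actually consumed (essentially in the regimes you guessed), and $\int_0^1(1-\th)\th^{t-2}\,\dd\th=\frac1{t(t-1)}$ produces the coefficient $\frac{t-2+D^2}{t-1}q(t)$ on $\|y\|^t$. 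If you replace your two-stage step by this direct expansion of the $t$-th power, the remainder of your induction goes through as written.
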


Almost the same result was obtained in \cite{pin80} in the special case when $\X$ is a Hilbert space (in which case one can take $D=1$) --  except that, 
in place of the term $\A{\mu_n(J)-1}2$ in \eqref{eq:} above, the bound in \cite{pin80} contained the larger term $\A n2-\sum_{i\in J}\a i2$. 

The proofs, whenever necessary, are deferred to Section~\ref{proofs}. 

\begin{corollary}\label{cor:}
In the conditions of Theorem~\ref{th:}, let $t>2$ and take any positive real numbers $\la_j$ for all $j\in\intr0{m-1}$. Then 
\begin{equation}\label{eq:cor}
	\E\|S_n\|^t\le C_A^{(t)}\A nt+C_B^{(t)}B_n^t,
\end{equation} 
where 
\begin{equation}\label{eq:CA,CB}
	\begin{aligned}
	C_A^{(t)}&:=\sum_{j\in\intr0{m-1}}\c jt  \,\frac{t-2j-2}{t-2}\frac1{\la_j^{2j}j!}
	\quad\text{and} \\ 
	C_B^{(t)}&:=\tc mt \prod_{j\in\intr1m}\frac1{t/2-m+j} + \,\sum_{j\in\intr0{m-1}}\c jt\,\frac{2j}{t-2}\frac{\la_j^{t-2j-2}}{j!}.  
	\end{aligned}
\end{equation} 
\end{corollary}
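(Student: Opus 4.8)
The plan is to start from the bound of Theorem~\ref{th:} and estimate its right-hand side by $C_A^{(t)}\A nt+C_B^{(t)}B_n^t$, handling separately the $m+1$ groups of summands in \eqref{eq:}. First I would dispose of the degenerate case $B_n=0$: then every $b_i=0$, so \eqref{eq:cond} forces $X_i=0$ a.s.\ and both sides of \eqref{eq:cor} vanish; assume henceforth $B_n>0$. The $j=0$ summand of the first sum in \eqref{eq:} equals $\c0t\A nt$ (since $\mu_n(\emptyset)-1=n$ and the empty product is $1$), which is exactly the $j=0$ part of $C_A^{(t)}\A nt$. Each summand with $1\le j\le m-1$ will be split, by interpolation, into an $A$-part contributing to $C_A^{(t)}\A nt$ and a $B$-part contributing to the second sum of $C_B^{(t)}$; and the last ($j=m$) summand of \eqref{eq:} will be shown to be a pure $B_n^t$ term giving the first term of $C_B^{(t)}$.

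For the interpolation, fix $j\in\intr1{m-1}$ and set $\al_j:=\frac{t-2j-2}{t-2}$ and $\be_j:=\frac{2j}{t-2}$, so that $\al_j,\be_j\ge0$ and $\al_j+\be_j=1$ (using $t>2$ and $j\le m-1$). Hölder's inequality (log-convexity of moments) gives $\a i{t-2j}\le(\a it)^{\al_j}(\a i2)^{\be_j}$, since $t\al_j+2\be_j=t-2j$; and the weighted Young/AM--GM inequality, in which the normalizing powers of $\rho$ cancel because $2j\,\al_j=(t-2j-2)\be_j$, yields for every $\rho>0$
\begin{equation*}
\a i{t-2j}\le\al_j\,\rho^{-2j}\,\a it+\be_j\,\rho^{t-2j-2}\,\a i2 .
\end{equation*}
Since $\a i2=\E\|X_i\|^2\le b_i^2$ by \eqref{eq:cond}, summing over $i<\min J$ and choosing $\rho=\la_j B_n$ (legitimate, as the inequality holds for all $\rho>0$) gives a bound on $\A{\mu_n(J)-1}{t-2j}$. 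Multiplying by $\prod_{i\in J}b_i^2$ and summing over $J\in\J_{n,j}$ produces, for the $A$-part, $\sum_{i_0}\a{i_0}t\,e_j\big(b_{i_0+1}^2,\dots,b_n^2\big)$ (where $e_j$ is the $j$-th elementary symmetric polynomial), and for the $B$-part, $e_{j+1}(b_1^2,\dots,b_n^2)$ after the bijection $J'\mapsto(\min J',J'\setminus\{\min J'\})$ between pairs $(i,J)$ with $i<\min J$ and sets in $\J_{n,j+1}$.

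The arithmetic is then closed by the Maclaurin-type bound $e_k(x_1,\dots,x_n)\le\frac1{k!}\big(\sum_i x_i\big)^k$ for $x_i\ge0$, together with $e_j(b_{i_0+1}^2,\dots)\le e_j(b_1^2,\dots,b_n^2)$. These give the $A$-part $\le\al_j(\la_jB_n)^{-2j}\frac{B_n^{2j}}{j!}\A nt=\al_j\frac1{\la_j^{2j}j!}\A nt$, and the $B$-part $\le\be_j(\la_jB_n)^{t-2j-2}\frac{B_n^{2j+2}}{(j+1)!}=\be_j\la_j^{t-2j-2}\frac{B_n^t}{(j+1)!}\le\be_j\la_j^{t-2j-2}\frac{B_n^t}{j!}$, the last (harmless) relaxation producing the $\frac1{j!}$ of \eqref{eq:CA,CB}. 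Multiplying by $\c jt$ and summing over $j\in\intr0{m-1}$ reproduces $C_A^{(t)}\A nt$ and the second sum in $C_B^{(t)}$.

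It remains to bound the last summand of \eqref{eq:}. With $\th:=t/2-m\in[0,1)$ and $x_i:=b_i^2$, using $\A{\mu_n(J)-1}2\le\sum_{i<\min J}x_i$ and monotonicity of $s\mapsto s^{\th}$, this reduces to the key estimate
\begin{equation*}
\sum_{J\in\J_{n,m}}\Big(\sum_{i<\min J}x_i\Big)^{\th}\prod_{i\in J}x_i\le\prod_{j\in\intr1m}\frac1{\th+j}\,\Big(\sum_i x_i\Big)^{\th+m},
\end{equation*}
whose right-hand side equals $\prod_{j\in\intr1m}\frac1{t/2-m+j}\,B_n^t$. I expect this inequality to be the main obstacle, since the fractional weight $\th$ blocks a direct appeal to Maclaurin. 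The plan is to prove it by sum-to-integral comparison: writing $v_k:=\sum_{i\le k}x_i$ (so $v_n=B_n^2$) and attaching to each $J=\{i_1<\dots<i_m\}$ the box $\prod_l(v_{i_l-1},v_{i_l}]\subset\{0<u_1<\dots<u_m<B_n^2\}$, the summand $v_{i_1-1}^{\th}\prod_l x_{i_l}$ equals $\int_{\mathrm{box}}v_{i_1-1}^{\th}\,du\le\int_{\mathrm{box}}u_1^{\th}\,du$ (as $u_1>v_{i_1-1}$ and $\th\ge0$); the boxes being pairwise disjoint and contained in the ordered simplex, summation gives $\le\int_{0<u_1<\dots<u_m<B_n^2}u_1^{\th}\,du=\prod_{j\in\intr1m}\frac1{\th+j}(B_n^2)^{\th+m}=\prod_{j\in\intr1m}\frac1{\th+j}B_n^t$, the integral being a routine iterated (Beta-function) computation. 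Multiplying by $\tc mt$ yields the first term of $C_B^{(t)}$, and adding all contributions gives \eqref{eq:cor}.
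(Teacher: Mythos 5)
Your proposal is correct and follows essentially the same route as the paper's proof: each of the first $m$ groups of terms in \eqref{eq:} is interpolated between $\A nt$ and $B_n^t$ via log-convexity/H\"older together with a weighted Young inequality governed by the balancing parameters $\la_j$, and the last group is bounded by a sum-to-integral comparison producing the factor $\prod_{j\in\intr1m}\frac1{t/2-m+j}$. The differences are only organizational (per-term rather than aggregate interpolation, the bijection onto $(j+1)$-subsets, and an $m$-dimensional disjoint-box volume argument in place of the paper's one-dimensional Riemann-sum induction) and do not change the substance.
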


In the special case when $\X$ is a Hilbert space, a similar but somewhat less precise bound was obtained in \cite[Corollary]{pin80}. Namely, in place of the terms $\frac{t-2j-2}{t-2}$, $\frac1{t/2-m+j}$, and $\frac{2j}{t-2}$ in \eqref{eq:CA,CB}, the bound in \cite[Corollary]{pin80} contains the larger terms $1$, $\frac1j$, and $1$, respectively. \\

Results somewhat similar to Corollary~\ref{cor:} were also obtained in \cite{pin94}, by rather different methods. Particularly, \cite[Theorem~4.1]{pin94} implies the ``spectrum'' of inequalities  
\begin{equation}\label{eq:pin94}
	\E\|S_n\|^t\le K^t\big(c^t\A nt+c^{t/2}e^{t^2/c}D^tB_n^t\big)
\end{equation}
for some positive absolute constant $K$ and all real $t\ge2$, depending on the freely chosen value of the ``balancing'' parameter $c\in[1,t]$. 
One can use this freedom to minimize or quasi-minimize the bound in \eqref{eq:pin94} in $c\in[1,t]$, depending on the value of the ratio $\A nt/B_n^t$ -- as is done in \cite{pin94}; see Theorem~6.1 and the definition of $B_p^*$ on page~1693 there. 
In fact, \cite[Theorems~6.1 and 6.2]{pin94} show that 
\begin{enumerate}[(i)]
	\item for each $t\ge2$ and each value of the ratio $\A nt/B_n^t$, the minimum in $c\in[1,t]$ of the upper bound in \eqref{eq:pin94} is an optimal (up to a factor of the form $K^t$) upper bound on $\E\|S_n\|^t$; 
	\item the spectrum of the bounds  in \eqref{eq:pin94} is also ``minimal'' in the sense that for each $c\in[1,t]$ there exists a value of the ratio $\A nt/B_n^t$ such that the corresponding ``individual'' bound in \eqref{eq:pin94} is the best possible (again up to a factor of the form $K^t$);
	\item the above statements (i) and (ii) hold if the term(s) $\A nt$ and/or $B_n^t$ are/is replaced, respectively, by $\E\max_{i\in\intr1n}\|X_i\|^t$ and/or $\E\big(\sum_1^n\E_{i-1}\|X_i\|^2
	\big)^{t/2}$. 
\end{enumerate}

One can similarly use the balancing parameters $\la_j$ in \eqref{eq:CA,CB} to minimize or quasi-minimize the bound in \eqref{eq:cor}. 
Moreover, 
by Remark~6.8 in \cite{pin94} (with details given by \cite[Proposition~9.2
]{pin94-arxiv}), the minimum of the bound on $\E\|S_n\|^t$ in \cite[Theorem~1]{pin80} with respect to the corresponding balancing parameters is equivalent (once again up to a factor of the form $K^t$) to the minimum in $c\in[1,t]$ of the bound in \eqref{eq:pin94} -- at least when the condition \eqref{eq:cond} holds; hence, the same is true for the bounds in \eqref{eq:cor} and \eqref{eq:}, which therefore possess the same up-to-the-$K^t$-factor optimality property. 

Thus, the main advantage of inequality \eqref{eq:pin94} over \eqref{eq:cor} and \eqref{eq:} is that the former does not require the condition \eqref{eq:cond}. On the other hand, the bounds in \eqref{eq:cor} and \eqref{eq:} are quite explicit and do not contain unspecified constants such as $K$ in \eqref{eq:pin94}. 
If one follows the lines of the proof of \eqref{eq:pin94} in \cite{pin94} without serious efforts at modification, the resulting value of $K$ turns out to be large, namely equal $120$, quite in contrast with the constant factors in \eqref{eq:cor} and \eqref{eq:} -- cf.\ e.g.\ the bound in \eqref{eq:t=3} below, which is a particular case of \eqref{eq:cor}. 
Moreover, \eqref{eq:cor} and \eqref{eq:} appear to provide a better dependence of the bounds on the 2-smoothness constant $D$.  


As for the condition \eqref{eq:cond}, it turns out quite naturally satisfied in applications to concentration of measure on product spaces for separately Lipschitz functions and, in particular, for the norm of the sums of independent random vectors -- cf.\ e.g.\ \cite{re-center} and further bibliography there. Such an application is provided in Section~\ref{concentr} of the present note.



Special cases of Corollary~\ref{cor:} are the following inequalities:  
\begin{align}
	\E\|S_n\|^t &\le\frac{t-2+D^2}{t-1}\big(\A nt+(t-1)B_n^t\big)\quad\text{for all}\quad t\in(2,3]\quad\text{and} \label{eq:2<t<3} \\ 
	\E\|S_n\|^t &\le\frac{t-2+D^2}{t-1}\Big(\frac{\A nt}{\al^{t-3}}
	+(t-1)\frac{B_n^t}{(1-\al)^{t-3}}\Big)\quad\text{for all}\quad t\in[3,4] \label{eq:3<t<4}
\end{align}
and $\al\in(0,1)$. 
Minimizing the upper bound in \eqref{eq:3<t<4} in $\al\in(0,1)$, one can combine \eqref{eq:2<t<3} and \eqref{eq:3<t<4} into the inequality  
\begin{equation}\label{eq:2<t<4,min}
		\E\|S_n\|^t\le\frac{t-2+D^2}{t-1}\big[\big(\A nt\big)^{1/s_t}
	+(t-1)^{1/s_t}B_n^{t/s_t}\big]^{s_t}\quad\text{for all}\quad t\in(2,4],   
\end{equation}
where $s_t:=\max(1,t-2)$. 


Note that the coefficient of $\A nt$ in \eqref{eq:2<t<3} cannot be less than $1$, 
whatever the coefficient of $B_n^t$ in \eqref{eq:2<t<3} may be; for instance, take $n=1$ and let $X_1$ be such that $\P(X_1=0)=1-2p$ and $\P(X_1=1)=\P(X_1=-1)=p$, with $p\downarrow0$.   
So, the coefficient of $\A nt$ in \eqref{eq:2<t<3} takes the optimal value $1$ when $D=1$. 

Consider the now the case when the latter condition holds, that is, when $\X$ is a Hilbert space. 
Then the behavior of the upper bound in \eqref{eq:2<t<4,min} depends mainly on the ratio $r:=r_n^{(t)}:=\A nt/B_n^t$. 
If $r\to\infty$ \big(which happens e.g.\ in the situation described in the previous paragraph) then the upper bound in \eqref{eq:2<t<4,min} is asymptotic to $\A nt$ and thus is asymptotically optimal. 
If $r\downarrow0$ \big(which happens e.g.\ in the important case when $n\to\infty$, the $X_i$'s are iid, $t$ is fixed, and $\E|X_1|^t<\infty$\big) then the upper bound in \eqref{eq:2<t<4,min} is asymptotic to $(t-1)B_n^t$. 
On the other hand, by the central limit theorem, this upper bound cannot be less than $\E|Z|^t B_n^t$, where $Z$ is a standard normal r.v. Since $\E|Z|^t=t-1$ for $t\in\{2,4\}$, it follows that the upper bound in \eqref{eq:2<t<4,min} is asymptotically optimal when $r\downarrow0$ and at that either $t\downarrow2$ or $t\to4$. 
The graph of the ratio of $t-1$ to $\E|Z|^t$ in Figure~\ref{fig:rat} shows that the asymptotic coefficient $t-1$ at $B_n^t$ in \eqref{eq:2<t<4,min} for $r\downarrow0$ is rather close to optimality for all $t\in(2,4]$. 

\begin{figure}[h]
	\centering	\includegraphics[width=.7\textwidth]{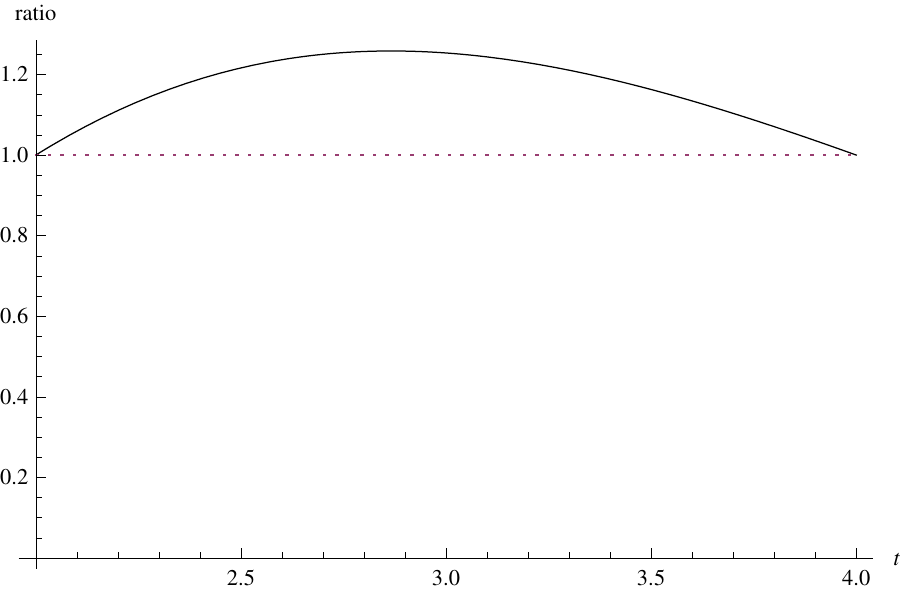}
	\caption{Graph of the ratio of $t-1$ to $\E|Z|^t$ for $t\in(2,4]$. }
	\label{fig:rat}
\end{figure}

%
%

Again when $\X$ is a Hilbert space, \eqref{eq:2<t<3} and \eqref{eq:3<t<4} with $\al=\frac12$ yield  
\begin{equation}\label{eq:2<t<4,H}
	\E\|S_n\|^t\le2^{(t-3)_+}\big(\A nt+(t-1)B_n^t\big)\quad\text{for all}\quad t\in(2,4], 
\end{equation}
where $u_+:=\max(0,u)$. This latter result was obtained, by a more Stein-like method, in \cite[Lemma~6.3]{chen-shao05} in the case when $\X=\R$, $t\in(2,3]$, and the $X_i$'s are independent. 
In the case when $\X=\R$, $t=3$, and the $X_i$'s are iid, inequality \eqref{eq:2<t<4,H} was stated without proof in \cite[page~341]{novak05};  
in the more general case when $\X=\R$, $t\in(2,4]$, and the $X_i$'s are independent but not necessarily identically distributed, the bound 
in \eqref{eq:2<t<4,H} was obtained in \cite[Lemma~13]{novak00}, but with the larger factor $t(t-1)2^{-t/2}$ in place of $t-1$.

In the case $t=3$, which is particularly important in applications to Berry--Esseen-type bounds (see e.g.\ \cite{chen-shao05,nonlinear}), \eqref{eq:2<t<3} or \eqref{eq:3<t<4} 
yields 
\begin{equation}\label{eq:t=3}
	\E\|S_n\|^3\le\tfrac{1+D^2}2\,(\A n3+2B_n^3).
\end{equation}



Rosenthal-type inequalities and related results can be found, among other papers, in \cite{rosenthal,burk,MR0443034,pin-utev84,utev-extr,sibam,ibr-shar97,latala-moments,gine-lat-zinn,
ibr-sankhya,bouch-etal
}. 


\section{Application to concentration of measure on product spaces for separately Lipschitz functions}\label{concentr}

In this section, let us re-define $X_1,\dots,X_n$ to be independent r.v.'s with values in measurable spaces $\X_1,\dots,\X_n$, respectively. Let $g\colon\W\to\R$ be a measurable function on the product space $\W:=\X_1\times\dots\times\X_n$. 
Let us say (cf.\ \cite{bent-isr,normal}) that $g$ is {\em separately Lipschitz} if it satisfies a Lipschitz-type condition in each of its arguments: 
\begin{equation}\label{eq:Lip}
|g(x_1,\dots,x_{i-1},\tilde x_i,x_{i+1},\dots,x_n) -
g(x_1,\dots,x_n)| \le \rho_i(\tilde x_i,x_i)
\end{equation}
for some measurable functions $\rho_i\colon\X_i\times\X_i\to\R$ and 
all $i\in\intr1n$, $(x_1,\dots,x_n)\in\W$, and $\tilde x_i\in\X_i$. 
Take now any separately Lipschitz function $g$ and let  
$$Y:=g(X_1,\dots,X_n).$$
Suppose that the r.v.\ $Y$ has a finite mean. 
Take any real $t>2$. 



\begin{corollary}\label{cor:concentr}
For each $i\in\intr1n$, take any $x_i$ and $y_i$ in $\X_i$.  
Then 
\begin{equation*}
	\E|Y-\E Y|^t
	\le C_t\, C_A^{(t)}\sum_1^n\E\rho_i(X_i,x_i)^t+C_B^{(t)}\Big(\sum_1^n\E\rho_i(X_i,y_i)^2\Big)^{t/2},   
\end{equation*}
where $C_A^{(t)}$ and $C_B^{(t)}$ are as in \eqref{eq:CA,CB} with $D=1$, 
\begin{gather*}
	C_t:=R(t,b_t),    
\end{gather*}	
and $b_t$ is the unique maximizer of 
$$R(t,b):=(b^{t - 1} + (1 - b)^{t - 1}) \big(b^{\frac1{t - 1}} + (1 - b)^{\frac1{t - 1}}\big)^{t - 1}$$  over all $b\in[0,\frac12]$. 
\end{corollary}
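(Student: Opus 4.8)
The plan is to reduce Corollary~\ref{cor:concentr} to Corollary~\ref{cor:} by constructing, from the separately Lipschitz function $g$ and the independent $X_1,\dots,X_n$, an auxiliary martingale whose terminal norm controls $|Y-\E Y|$. The natural construction is the Doob martingale $S_i := \E(Y\mid\F_i)-\E Y$, where $\F_i := \si(X_1,\dots,X_i)$ and $\F_0$ is trivial; then $S_0=0$, $S_n = Y-\E Y$, and the martingale differences are $\De_i := S_i-S_{i-1} = \E(Y\mid\F_i)-\E(Y\mid\F_{i-1})$. This is, however, a real-valued martingale, so the ambient space is $\X=\R$ with $D=1$, which is exactly the regime in which $C_A^{(t)}$ and $C_B^{(t)}$ are declared to be evaluated.

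**Controlling the differences.** The heart of the matter is to bound $\E_{i-1}|\De_i|^2$ by something of the form $b_i^2$ and to bound $\E|\De_i|^t$ by a constant times $\E\rho_i(X_i,\cdot)^t$. The key probabilistic identity is that, since $X_i$ is independent of $\F_{i-1}$, the difference $\De_i$ can be written as a difference-of-integrals in the $i$-th coordinate: writing $g$ as a function and integrating out $X_{i+1},\dots,X_n$ against their (product) law, one gets that $\De_i$ equals $h_i(X_1,\dots,X_i)$ minus its conditional mean over an independent copy $X_i'$ of $X_i$, i.e. a centered version in the $i$-th argument. The separately Lipschitz condition \eqref{eq:Lip} then gives a pointwise bound $|\De_i|\le \E\big(\rho_i(X_i,X_i')\mid X_i\big)$ (integrating \eqref{eq:Lip} over the tail coordinates preserves the Lipschitz bound by the triangle/Jensen inequality). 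From here, comparing $\rho_i(X_i,X_i')$ to $\rho_i(X_i,x_i)+\rho_i(X_i',x_i)$ (or the analogous move with $y_i$) and applying a symmetrization/convexity argument yields the two needed estimates: $b_i^2 := \E\rho_i(X_i,y_i)^2$ for the conditional-second-moment control, and the $t$-th moment bound in terms of $\E\rho_i(X_i,x_i)^t$.

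**The constant $C_t$.** Applying Corollary~\ref{cor:} to this Doob martingale gives $\E|Y-\E Y|^t\le C_A^{(t)}\A nt + C_B^{(t)}B_n^t$ with $\A nt=\sum_i\E|\De_i|^t$ and $B_n^t=\big(\sum_i b_i^2\big)^{t/2}$. The remaining task is to replace $\E|\De_i|^t$ and $b_i^2$ by the stated right-hand sides $\E\rho_i(X_i,x_i)^t$ and $\E\rho_i(X_i,y_i)^2$, absorbing the loss into the factor $C_t=R(t,b_t)$. The appearance of $R(t,b)=(b^{t-1}+(1-b)^{t-1})\big(b^{1/(t-1)}+(1-b)^{1/(t-1)}\big)^{t-1}$ is exactly what one expects from the symmetrization step: bounding $\E|\De_i|^t$ where $\De_i$ is a difference of an independent copy requires a sharp constant in an inequality of the form $\E|U-U'|^t\le C_t\,(\E|U-c|^t+\E|U'-c|^t)$ for independent copies $U,U'$ and the optimizing center $c$; optimizing the worst-case split between the two tails leads to maximizing $R(t,b)$ over $b\in[0,\frac12]$, with $b_t$ the maximizer.

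**Main obstacle.** The delicate point I expect is pinning down the sharp constant $C_t=R(t,b_t)$, not the martingale reduction. The reduction via the Doob martingale and \eqref{eq:Lip} is essentially bookkeeping; the subtlety is in the symmetrization inequality, where one must identify the extremal two-point configuration that makes $R(t,b)$ emerge and verify that $b_t\in[0,\frac12]$ is indeed the unique maximizer (a one-variable calculus/convexity check that $R(t,\cdot)$ is unimodal on $[0,\frac12]$). A secondary technical care is needed to ensure that integrating the Lipschitz bound \eqref{eq:Lip} over the ``future'' coordinates $X_{i+1},\dots,X_n$ does not inflate the constant, which follows from the tower property together with conditional Jensen applied coordinatewise.
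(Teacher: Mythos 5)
Your plan is structurally the same as the paper's: the paper's entire proof is the single sentence that the corollary ``follows immediately from Corollary~\ref{cor:} and \cite[Corollary 3.1]{re-center}'', and that cited result is exactly the Doob-martingale reduction you describe. Setting $\F_i:=\si(X_1,\dots,X_i)$ and $S_i:=\E(Y\mid\F_i)-\E Y$, the difference $\De_i$ is, conditionally on $\F_{i-1}$, a centered function $U-\E_{i-1}U$ of $X_i$ alone; the separately Lipschitz condition \eqref{eq:Lip} gives $|U-c|\le\rho_i(X_i,z_i)$ for the center $c$ obtained by plugging $z_i$ into the $i$-th slot, whence $\E_{i-1}|\De_i|^2\le\E\rho_i(X_i,y_i)^2$ with constant $1$ (the mean minimizes the quadratic) and $\E|\De_i|^t\le C_t\,\E\rho_i(X_i,x_i)^t$ via a re-centering inequality. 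You have correctly reconstructed this, including why $C_t$ multiplies only the first term.

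The genuine gap is the re-centering inequality itself: the claim that $\E|U-\E U|^t\le R(t,b_t)\,\E|U-c|^t$ for an arbitrary center $c$, with the extremal two-point configuration producing $R(t,b)$ and $b_t\in[0,\tfrac12]$ its unique maximizer. You explicitly defer this step, but it is the only nontrivial ingredient beyond Corollary~\ref{cor:}; the paper outsources it wholesale to \cite{re-center}, so nothing in the present paper would help you fill it. A secondary imprecision: your later formulation $\E|U-U'|^t\le C_t\big(\E|U-c|^t+\E|U'-c|^t\big)$ for two independent copies is not the inequality actually needed. Since $\De_i$ is $U$ minus its conditional \emph{mean} rather than minus an independent copy, the symmetrized version would cost an extra factor of $2$ and would not reproduce the stated bound, in which $C_t=R(t,b_t)$ multiplies a single term $\E\rho_i(X_i,x_i)^t$ per coordinate. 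The correct target is the one-copy re-centering inequality.
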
 

This follows immediately from Corollary~\ref{cor:} and \cite[Corollary 3.1]{re-center}. 

An example of separately Lipschitz functions $g:\X^n\to\R$ is given by the formula 
\begin{equation*}
	g(x_1,\dots,x_n)=\|x_1+\dots+x_n\|
\end{equation*}
for all $x_1,\dots,x_n$ in any (not necessarily 2-smooth) separable Banach space $(\X,\|\cdot\|)$. 
In this case, one may take $\rho_i(\tilde x_i,x_i)\equiv\|\tilde x_i-x_i\|$. 
Thus, one immediately obtains 

\begin{corollary}\label{cor:conc-sums}
Let $X_1,\dots,X_n$ be independent random vectors in a separable Banach space $(\X,\|\cdot\|)$. 
Let here $Y:=\|X_1+\dots+X_n\|$. 
For each $i\in\intr1n$, take any $x_i$ and $y_i$ in $\X$.  
Then, with $C_A^{(t)}$, $C_B^{(t)}$, and $C_t$ as in Corollary~\ref{cor:concentr}, 
\begin{equation*}
	\E|Y-\E Y|^t
	\le C_t\, C_A^{(t)}\sum_1^n\E\|X_i-x_i\|^t+C_B^{(t)}\Big(\sum_1^n\E\|X_i-y_i\|^2\Big)^{t/2}.    
\end{equation*}
In particular, 
$C_3<1.316$ and hence, by \eqref{eq:t=3} with $D=1$, 
\begin{equation*}
	\E|Y-\E Y|^3
	\le1.316\,\sum_1^n\E\|X_i-x_i\|^3+2\Big(\sum_1^n\E\|X_i-y_i\|^2\Big)^{3/2};       
\end{equation*}
this improves the corresponding bound in \cite{re-center}, which had the constant factor $3$ in place of $2$. 
\end{corollary}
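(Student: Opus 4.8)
The plan is to obtain the general-$t$ inequality as an immediate instance of Corollary~\ref{cor:concentr}, and then to specialize to $t=3$; the only step requiring real work is the numerical bound $C_3<1.316$.

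First I would check that $g(x_1,\dots,x_n):=\|x_1+\dots+x_n\|$ is separately Lipschitz in the sense of \eqref{eq:Lip} with $\rho_i(\tilde x_i,x_i):=\|\tilde x_i-x_i\|$. Replacing the $i$th coordinate by $\tilde x_i$ changes the argument of the norm by $\tilde x_i-x_i$, so the triangle inequality gives
$$\big|\,\|x_1+\dots+\tilde x_i+\dots+x_n\|-\|x_1+\dots+x_n\|\,\big|\le\|\tilde x_i-x_i\|,$$
which is exactly \eqref{eq:Lip}. No 2-smoothness of $\X$ is needed for this, since the hypotheses of Corollary~\ref{cor:concentr} involve only the $\rho_i$. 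Applying that corollary with these $\rho_i$, and using $\E\rho_i(X_i,x_i)^t=\E\|X_i-x_i\|^t$ and $\E\rho_i(X_i,y_i)^2=\E\|X_i-y_i\|^2$, yields the first displayed inequality at once.

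For the cubic refinement I would invoke \eqref{eq:t=3} with $D=1$. There the prefactor $\tfrac{1+D^2}2$ equals $1$, so \eqref{eq:t=3} reads $\E\|S_n\|^3\le\A n3+2B_n^3$; being a particular case of \eqref{eq:cor}, it exhibits the admissible constants $C_A^{(3)}=1$ and $C_B^{(3)}=2$. Substituting these into the inequality of Corollary~\ref{cor:concentr} leaves in front of the first sum only the factor $C_3\,C_A^{(3)}=C_3$, so it suffices to show $C_3<1.316$.

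The remaining and main task is to maximize
$$R(3,b)=\big(b^2+(1-b)^2\big)\big(\sqrt b+\sqrt{1-b}\big)^2=\big(b^2+(1-b)^2\big)\big(1+2\sqrt{b(1-b)}\big)$$
over $b\in[0,\tfrac12]$ and to verify $\max_b R(3,b)<1.316$. Since $R(3,0)=R(3,\tfrac12)=1$, the maximizer $b_3$ is interior; solving $\partial_b R(3,b)=0$ gives $b_3\approx0.082$ with $R(3,b_3)\approx1.3156$. A rigorous bound $C_3<1.316$ then follows by elementary one-variable calculus: on any subinterval of $[0,\tfrac12]$ the decreasing factor $b^2+(1-b)^2$ and the increasing factor $1+2\sqrt{b(1-b)}$ may each be replaced by its larger endpoint value, so confining $b_3$ to a short interval around $0.082$ and checking that $R(3,\cdot)$ is smaller outside it yields the claim. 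Feeding $C_3<1.316$, $C_A^{(3)}=1$, and $C_B^{(3)}=2$ into the general bound gives the stated cubic inequality, whose coefficient $2$ in place of $3$ is the asserted improvement over \cite{re-center}.
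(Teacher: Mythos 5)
Your proposal is correct and follows essentially the same route as the paper: verify the separate Lipschitz condition for $g(x_1,\dots,x_n)=\|x_1+\dots+x_n\|$ with $\rho_i(\tilde x_i,x_i)=\|\tilde x_i-x_i\|$ via the triangle inequality, apply Corollary~\ref{cor:concentr}, and specialize to $t=3$ using \eqref{eq:t=3} with $D=1$ (so $C_A^{(3)}=1$, $C_B^{(3)}=2$). Your explicit maximization of $R(3,b)$, giving $b_3\approx0.082$ and $C_3\approx1.3156<1.316$, correctly supplies the numerical detail the paper leaves unstated.
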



The separate-Lipschitz 
condition \eqref{eq:Lip} is easier to check than a joint-Lipschitz one. Also, the former is more generally applicable. On the other hand, when a joint-Lipschitz condition is satisfied, one can generally obtain better bounds. Literature on the concentration of measure phenomenon, almost all of it for joint-Lipschitz settings, is vast; let us mention here only \cite{ledoux-tala,ledoux_book,lat-olesz,
ledoux-olesz}. 

\section{Proofs}\label{proofs} 

The proof of Theorem~\ref{th:} is almost the same as that of \cite[Theorem~1]{pin80}; 
instead of \cite[Lemma~1]{pin80} one should now use the following more general lemma, valid for general 2-smooth Banach spaces. 

\begin{lemma}\label{lem:1}
Suppose that $t\ge2$ and $p(t)$ and $q(t)$ satisfy conditions \eqref{eq:p(s),q(s)}. 
Suppose also that the function $Q(\cdot):=\|\cdot\|^2$ is twice differentiable everywhere on $\X$ -- 
which may be assumed without loss of generality in view of \cite[Remark~2.4]{pin94}. 
Then for any $x$ and $y$ in $\X$ 
\begin{equation*}
	\|x+y\|^t-\|x\|^t\le\tfrac t2\|x\|^{t-2}Q'(x)(y)
	+\tfrac{t(t-2+D^2)}2\, p(t)\|x\|^{t-2}\|y\|^2+\tfrac{t-2+D^2}{t-1}q(t)\|y\|^t.   
\end{equation*}
\end{lemma}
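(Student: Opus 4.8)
The plan is to prove the inequality pointwise in $x,y\in\X$ by reducing to a scalar optimization. First I would dispose of the trivial case $x=0$, where the claim becomes $\|y\|^t\le\frac{t-2+D^2}{t-1}q(t)\|y\|^t$; since $D\ge1$ and $q(t)\ge q(2)>0$ with $q(s)\ge1$ for $s>2$, the coefficient exceeds $1$ and the inequality holds. So assume $x\ne0$, set $r:=\|x\|>0$, and observe that $Q'(x)(y)=2\langle$ directional-derivative of $\|\cdot\|^2$ at $x$ in direction $y\rangle$; the plan is to treat $Q'(x)(y)$ as a free linear functional of $y$ whose size is controlled by the $2$-smoothness. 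The key structural input is inequality \eqref{eq:2-smooth}: writing it as $\|x+y\|^2\le 2\|x\|^2+2D^2\|y\|^2-\|x-y\|^2$ and comparing with the second-order expansion of $Q$, one extracts that $Q$ has a ``second-order upper bound'' of the form $\|x+y\|^2\le\|x\|^2+Q'(x)(y)+D^2\|y\|^2$, which is the genuine quantitative content of $(2,D)$-smoothness for the differentiable norm-square.

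Next I would homogenize. Because both sides are homogeneous of degree $t$ jointly in $(x,y)$, I can normalize $\|x\|=1$ and introduce the scalar variables $u:=Q'(x)(y)$ and $v:=\|y\|$, subject to the constraint $|u|\le 2v$ (which follows from the smoothness bound applied to $\pm y$, forcing $|Q'(x)(y)|\le 2\|x\|\,\|y\|$). The claim then reduces to showing, for all admissible $(u,v)$,
\begin{equation*}
	\|x+y\|^t-1\le\tfrac t2\,u+\tfrac{t(t-2+D^2)}2\,p(t)\,v^2+\tfrac{t-2+D^2}{t-1}\,q(t)\,v^t,
\end{equation*}
where $\|x+y\|^2\le 1+u+D^2v^2$. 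The strategy is to bound $\|x+y\|^t=(\|x+y\|^2)^{t/2}\le(1+u+D^2v^2)^{t/2}$ and then prove the purely scalar inequality
\begin{equation*}
	(1+w)^{t/2}-1\le\tfrac t2\,u+\tfrac{t(t-2+D^2)}2\,p(t)\,v^2+\tfrac{t-2+D^2}{t-1}\,q(t)\,v^t
\end{equation*}
with $w:=u+D^2v^2$. This isolates all the work into a one-variable convexity estimate.

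The main obstacle, and the heart of the proof, is establishing this last scalar inequality with the precise constants dictated by $\c jt$, $\tc mt$, and conditions \eqref{eq:p(s),q(s)}. I expect to handle it by a Taylor-with-remainder argument on $\phi(w):=(1+w)^{t/2}$: writing $\phi(w)=1+\tfrac t2 w+\tfrac t2(\tfrac t2-1)\int_0^w(w-s)(1+s)^{t/2-2}\,ds$, substituting $w=u+D^2v^2$, peeling off the linear term $\tfrac t2 u$, and then absorbing the $\tfrac t2 D^2 v^2$ contribution together with the quadratic remainder into the $p(t)v^2$ term, while the genuinely nonlinear excess (active only when $v$ is large relative to $u$, i.e.\ near the boundary $|u|=2v$) gets absorbed into the $q(t)v^t$ term. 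The conditions $p(s)\ge1$, $q(s)\ge1$ for $s>2$ and the normalization $p(s)^{1/(3-s)}+q(s)^{1/(3-s)}\le1$ for $s>3$ are exactly what make the split work across the three regimes $t=2$, $2<t\le3$, and $t>3$; the delicate point is checking the boundary case $t=3$ and the transition at $t=3$, where the exponent $1/(3-s)$ changes sign, so I would verify the endpoint constraint $p(2)+q(2)\ge1$ separately and then argue monotonicity in $t$ to extend to all $t\ge2$. This is precisely the step where following the template of \cite[Lemma~1]{pin80} requires the extra factor $t-2+D^2$ in place of the Hilbert-space value $t-1$, and checking that the smoothness bound $\|x+y\|^2\le 1+u+D^2v^2$ feeds correctly through the remainder estimate is the crux.
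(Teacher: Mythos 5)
Your route is genuinely different from the paper's, and it has a gap that is not just a missing computation: the ``purely scalar inequality'' you reduce to is \emph{false} for large $D$. Concretely, take $t=2.1$, $p(t)=q(t)=1$ (admissible, since for $2<s\le3$ only $p(s),q(s)\ge1$ is required), $\|x\|=1$, $u=Q'(x)(y)=0$, $v=\|y\|=e^{-50}$, and $D=e^{50}$, so that $w=u+D^2v^2=1$. Then your left-hand side is $(1+w)^{t/2}-1=2^{1.05}-1\approx1.0705$, while your right-hand side is $\tfrac{t(t-2+D^2)}2 p(t)v^2+\tfrac{t-2+D^2}{t-1}q(t)v^t\approx1.05+e^{-5}/1.1\approx1.0561$. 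The constraint $|u|\le2v$ is satisfied, so the scalar claim fails. This does not contradict the lemma, because in any actual Banach space the triangle inequality forces $\|x+y\|^2\le(1+v)^2\approx1$ here, far below your surrogate $1+u+D^2v^2=2$. The root cause is that your very first move --- replacing $\|x+y\|^2$ by the endpoint smoothness bound $\|x\|^2+Q'(x)(y)+D^2\|y\|^2$ and then raising to the power $t/2$ --- discards the triangle-inequality control of $\|x+y\|$, which is exactly the information needed when $D$ is large and $\|y\|$ is small. (Separately, even where the scalar inequality is true, e.g.\ $D=1$, you only sketch its proof; since it holds with equality along the whole ray $u=2v$ at $t=3$, $p=q=1$, the ``peel off and absorb'' plan has no slack and is not a proof.)

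The paper avoids this by Taylor-expanding in the path parameter rather than at the endpoint: with $f(\th):=\|x+\th y\|^t-\|x\|^t$ one has $f(1)=f(0)+f'(0)+\int_0^1(1-\th)f''(\th)\,\dd\th$, and the two smoothness inputs $|Q'(x_\th)(y)|\le2\|x_\th\|\,\|y\|$ and $Q''(x_\th)(y,y)\le2D^2\|y\|^2$ combine at \emph{every} intermediate point into $f''(\th)\le t(t-2+D^2)\|x_\th\|^{t-2}\|y\|^2$; then the triangle inequality $\|x_\th\|\le\|x\|+\th\|y\|$ is applied \emph{before} raising to the power $t-2$, and conditions \eqref{eq:p(s),q(s)} are precisely what make $(\al+\be)^{t-2}\le p(t)\al^{t-2}+q(t)\be^{t-2}$ hold. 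The factor $\tfrac1{t-1}$ in front of $q(t)\|y\|^t$ then comes from $\int_0^1(1-\th)\th^{t-2}\,\dd\th=\tfrac1{t(t-1)}$ --- a mechanism your endpoint reduction has no way to reproduce. If you want to salvage your approach, you must keep the bound $\|x_\th\|\le\|x\|+\th\|y\|$ in play alongside the smoothness bounds, which leads you back to the paper's integral argument.
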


\begin{proof}[Proof of Lemma~\ref{lem:1}]
Take indeed any $x$ and $y$ in $\X$ and introduce 
$$f(\th):=\|x_\th\|^t-\|x\|^t=Q(x_\th)-\|x\|^t,$$ 
where $\th\in[0,1]$ and $x_\th:=x+\th y$, so that $f(0)=0$, $f'(0)=\tfrac t2\|x\|^{t-2}Q'(x)(y)$, and $f(1)=\|x+y\|^t-\|x\|^t$. 
Moreover, since $\X$ is $(2,D)$-smooth and $Q$ is twice differentiable on $\X$, for all $\th$ one has $Q''(x_\th)(y,y)\le2D^2\|y\|^2$ and $|Q'(x_\th)(y)|\le2\|x_\th\| \|y\|$ (cf.\ \cite[Lemma~2.2 and Remark~2.4]{pin94}); so,  
\begin{align*}
	f''(\th)&=\tfrac t2\,(\tfrac t2-1)\, Q(x_\th)^{t/2-2} \big(Q'(x_\th)(y)\big)^2 
	+ \tfrac t2\, Q(x_\th)^{t/2-1} Q''(x_\th)(y,y) \\
	&\le t(t-2+D^2)\|x_\th\|^{t-2}\|y\|^2 \\  
	&\le t(t-2+D^2)\big(p(t)\|x\|^{t-2}\|y\|^2+q(t)\th^{t-2}\|y\|^t\big), 
\end{align*}
since $(\al+\be)^{t-2}\le p(t)\al^{t-2}+q(t)\be^{t-2}$ for all $\al$ and $\be$ in $[0,\infty)$. 
It remains to use the identities  $$\|x+y\|^t-\|x\|^t=f(1)=f(0)+f'(0)+\textstyle{\int_0^1}(1-\th)f''(\th)\dd\th.$$   
\end{proof}

\begin{proof}[Proof of Corollary~\ref{cor:}]
For any $j\in\intr0{m-1}$ and $\la_j>0$
\begin{multline}\label{eq:sum1}
	j!\,\sum_{J\in\J_{n,j}} \A{\mu_n(J)-1}{t-2j} \prod_{i\in J}b_i^2
	\le\A n{t-2j} B_n^{2j}
	\le(\A nt)^{\frac{t-2j-2}{t-2}} \, (\A n2 B_n^{t-2})^{\frac{2j}{t-2}} \\ 
	\le(\A nt)^{\frac{t-2j-2}{t-2}} \, (B_n^t)^{\frac{2j}{t-2}}
	\le\tfrac{t-2j-2}{t-2}\la_j^{-2j}\,\A nt + \tfrac{2j}{t-2}\,\la_j^{t-2j-2}\,B_n^t; 
\end{multline}
the second inequality here follows by the log-convexity of $\A ns$ in $s$, and the fourth one follows by Young's inequality $x^{1/p}y^{1/q}\le\frac xp+\frac yq$ for any $x$, $y$, $p$ and $q$ such that $x\ge0$, $y\ge0$, $p>0$, $q>0$, and $\frac1p+\frac1q=1$. 
Next, 
\begin{multline}\label{eq:sum2}
\sum_{J\in\J_{n,m}} \big(\A{\mu_n(J)-1}2\big)^{t/2-m} \prod_{i\in J}b_i^2
\le \sum_{J\in\J_{n,m}} B_{\mu_n(J)-1}^{t-2m} \prod_{i\in J}b_i^2 \\ 
=
\sum_{i_m\in\intr1n}b_{i_m}^2\sum_{i_{m-1}\in\intr1{i_m-1}}b_{i_{m-1}}^2\dots\sum_{i_1\in\intr1{i_2-1}} b_{i_1}^2 B_{i_1-1}^{t-2m} 
\le\Big(\prod_{j\in\intr1m}\frac1{t/2-m+j}\Big)B_n^t; 
\end{multline}
the iterated sum here is bounded by induction, using the inequality 
\begin{equation}\label{eq:sum-int}
	\sum_{i\in\intr1k} b_i^2 B_{i-1}^s\le\frac1{s/2+1}B_k^{s+2}
\end{equation}
for $s\ge0$ and $k\in\intr1\infty$; in turn, inequality \eqref{eq:sum-int} follows because its left-hand side is a left (and hence lower) Riemann sum for the integral $\int_0^{B_k^2}x^{s/2}\dd x$ of the nondecreasing function $\cdot^{s/2}$
, whereas the right-hand side of \eqref{eq:sum-int} is the value of the integral. 
Now Corollary~\ref{cor:} follows by \eqref{eq:}, \eqref{eq:sum1}, and \eqref{eq:sum2}. 
\end{proof}

\bibliographystyle{abbrv}


\bibliography{C:/Users/Iosif/Dropbox/mtu/bib_files/citations}

\end{document}